\documentclass[11pt]{amsart}

\usepackage{graphicx}
\usepackage{amssymb}
\usepackage{amsmath}
\usepackage{enumerate}% http://ctan.org/pkg/enumerate
\usepackage{amsfonts}
\usepackage{amsthm}
\usepackage[all,import]{xy}
\usepackage{xcolor}
%\usepackage{blkarray} %For block matrices
%\usepackage{multirow} %For block matrices
%\DeclareGraphicsRule{.tif}{png}{.png}{`convert #1 `dirname #1`/`basename #1 .tif`.png}
\usepackage{url}
%% Define a new 'leo' style for the package that will use a smaller font.
\makeatletter
\def\url@leostyle{%
  \@ifundefined{selectfont}{\def\UrlFont{\sf}}{\def\UrlFont{\small\ttfamily}}}
\makeatother
\setcounter{tocdepth}{3}
%% Now actually use the newly defined style.
\urlstyle{leo}
%For index of terminology
%\usepackage{makeidx}
%\makeindex
%Number of equations
\numberwithin{equation}{section}
\usepackage[colorlinks=true, pdfstartview=FitV, linkcolor=black, 
            citecolor=black, urlcolor=black]{hyperref}
\usepackage{bookmark}
\usepackage[titletoc]{appendix} %For some reason, putting the bookmark and appendix packages after hyperref gets rid of the \endcsname error.

\theoremstyle{definition}
\newtheorem{prop}{Proposition}[section]

\newtheorem{theorem}[prop]{Theorem}
\newtheorem{lemma}[prop]{Lemma}
\newtheorem{corollary}[prop]{Corollary}

\newtheorem{defn}[prop]{Definition}
\newtheorem{example}[prop]{Example}
\newtheorem{remark}[prop]{Remark}

%%%%%%%%%%%%%%%%%%%%%%%%%%%%%%%%%%%%%%%%%%
%%%%%%%%%%%%%%%%%%%%%%%%%%%%%%%%%%%%%%%%%%
%	Hiro's macros
%%%%%%%%%%%%%%%%%%%%%%%%%%%%%%%%%%%%%%%%%%
%%%%%%%%%%%%%%%%%%%%%%%%%%%%%%%%%%%%%%%%%%
\newcommand{\nc}{\newcommand}
\nc{\DMO}{\DeclareMathOperator}	

%Making index of notation
\nc{\newnotation}{\nomenclature}
% Categories
\nc{\wrap}{\cW}
\nc{\Cob}{\mathsf{Cob}}
\nc{\mul}{\mathsf{Mul}}
\nc{\fat}{\mathsf{fat}}
\nc{\cob}{\mathsf{Cob}}
\nc{\coh}{\mathsf{Coh}}
\nc{\idem}{\mathsf{Idem}}
\nc{\sets}{\mathsf{Sets}}
\nc{\near}{\mathsf{near}}
\nc{\sing}{\mathsf{Sing}}
\nc{\symp}{\mathsf{Symp}}
\nc{\perf}{\mathsf{Perf}}
\nc{\ssets}{\mathsf{sSets}}
\nc{\cmpct}{\mathsf{cmpct}}
\nc{\compact}{\mathsf{cmpct}}
\nc{\pwrap}{\mathsf{PWrap}}
\nc{\coder}{\mathsf{Coder}}
\nc{\bimod}{\mathsf{Bimod}}
\nc{\grmod}{\mathsf{GrMod}}
\nc{\spaces}{\mathsf{Spaces}}
\nc{\pwrms}{\mathsf{PWrFuk}_{M,S}}
\nc{\pwrmf}{\mathsf{PWrFuk}_{M,F}}
\nc{\pwrapmf}{\mathsf{PWrFuk}_{M,F}}
\nc{\fuk}{\mathsf{Fukaya}}
\nc{\infwr}{\mathsf{InfWr}}
\nc{\fukaya}{\mathsf{Fukaya}}
\nc{\autml}{\mathsf{Aut}_{M,\Lambda}}
\nc{\fukml}{\mathsf{Fukaya}_{M,\Lambda}}
\nc{\fukmle}{\mathsf{Fukaya}_{M,\Lambda,\epsilon}}
\nc{\fukmod}{\wrfukcompact(M)\modules}
%\nc{\fukmod}{\fun(\mathsf{Fukaya}_{M,\Lambda}^{\op},\chain_\ZZ)}
\nc{\lag}{\mathsf{Lag}}
\nc{\lagm}{\lag_M}
\nc{\lago}{\lag^o}
\nc{\lagml}{\lag_{M,\Lambda}} % For when I get lazy.
\nc{\lagmle}{\lag_{M,\Lambda,\epsilon}}
\nc{\fun}{\mathsf{Fun}}
\nc{\vect}{\mathsf{Vect}}
\nc{\chain}{\mathsf{Chain}}
\nc{\wrfuk}{\mathsf{WrFukaya}}
\nc{\wrfukcompact}{\mathsf{WrFukaya}_{\mathsf{cmpct}}}
\nc{\pwrfuk}{\mathsf{PWrFukaya}}
\nc{\inffuk}{\mathsf{InfFuk}}
\nc{\pwrfukml}{\mathsf{PWrFukaya}_{M,\Lambda}}
\nc{\inffukml}{\mathsf{InfFuk}_{M,\Lambda}}
\nc{\nattrans}{\mathsf{NatTrans}}
\nc{\corres}{\mathsf{Corres}}
\nc{\fukep}{\fukaya_\Lambda(M,\epsilon)}
\nc{\fukepop}{\fukaya_\Lambda(M,\epsilon)^{\op}}
\nc{\lagep}{\lag_\Lambda(M,\epsilon)}
\DMO{\cyl}{cyl} % Cylindrical
\nc{\dbcoh}{D^b\mathsf{Coh}}
\nc{\corr}{\mathsf{Corr}}

\nc{\cat}{\mathsf{Cat}}
\nc{\Cat}{\mathsf{Cat}}
\nc{\ainfty}{\mathsf{A}_\infty}
\nc{\inftycat}{\mathcal{C}\!\operatorname{at}_\infty}
\nc{\Ainftycat}{\mathcal{C}\!\operatorname{at}_{A_\infty}}
\nc{\ainftycat}{\mathcal{C}\!\operatorname{at}_{A_\infty}}
\nc{\stablecat}{\mathcal{C}\!\operatorname{at}_\infty^{\Ex}}

% Words, names of things
\DMO{\im}{im}
\DMO{\inj}{inj}
\DMO{\fib}{fib}
\DMO{\conf}{Conf}
\DMO{\chains}{Chains}
\DMO{\cochains}{Cochains}
\DMO{\cone}{Cone}
\DMO{\ran}{Ran}
\DMO{\rot}{Rot}
\DMO{\leg}{Leg}
\DMO{\imm}{imm}
\DMO{\adj}{adj}
\DMO{\cube}{Cube}
\DMO{\back}{back}
\DMO{\front}{front}
\DMO{\flow}{Flow}
\DMO{\floer}{Floer}
\DMO{\maps}{Maps}
\DMO{\exact}{exact}
\DMO{\Decomp}{Decomp}
\DMO{\decomp}{Decomp}
\DMO{\collar}{collar}
\DMO{\yoneda}{Yoneda}
\DMO{\hamspace}{Ham}
\DMO{\sympspace}{Symp}
\DMO{\holomaps}{Holomaps}
\DMO{\comp}{Comp}
\DMO{\crit}{Crit}
\DMO{\test}{{test}}
\DMO{\sign}{sign}
\DMO{\topp}{top}
\DMO{\indx}{Index}
\DMO{\Break}{Break} % Partitions
\DMO{\zero}{zero} %Zero
\DMO{\ob}{Ob}
\DMO{\gr}{Gr} % Grassmanian
\DMO{\Gr}{Gr} % Grassmanian
\DMO{\cl}{Cl} % Clifford Algebra
\DMO{\grlag}{GrLag}
\DMO{\Pin}{Pin}
\DMO{\Graph}{Graph}
\DMO{\pin}{Pin}
\DMO{\gap}{Gap}
\DMO{\Ex}{Ex}
\DMO{\id}{id}
\DMO{\End}{End}
\DMO{\sym}{Sym} 
\DMO{\aut}{Aut}
\DMO{\DK}{DK} %Dold-Kan
\DMO{\poly}{poly} % Polynomial deRham forms
\DMO{\diff}{Diff}
\DMO{\coll}{coll}
\DMO{\dist}{dist} %Distance function
\DMO{\coker}{coker} %Cokernel
\nc{\kernel}{\ker} %Kernel
\DMO{\sspan}{span}
\DMO{\hocolim}{hocolim}	
\DMO{\holim}{holim}
\DMO{\sk}{sk}

\DMO{\ho}{ho}
\DMO{\fin}{fin}
\DMO{\ret}{Ret}
\DMO{\ham}{Ham}
\DMO{\con}{con}
\DMO{\leaf}{leaf}
\DMO{\supp}{supp}
\DMO{\edge}{edge}
\DMO{\colim}{colim}
\DMO{\edges}{edges}
\DMO{\Image}{image}
\DMO{\roots}{roots}
\DMO{\height}{height}
\DMO{\finmod}{FinMod}
\DMO{\leaves}{leaves}
\DMO{\planar}{planar}
\DMO{\vertices}{vertices}

\nc{\lagg}{\lag^{\cG}}
\nc{\iso}{\mathsf{Iso}}
\nc{\Set}{\mathsf{Set}}
\nc{\ass}{\mathsf{ \bf Ass}}
\nc{\Mod}{\mathsf{Mod}}
\nc{\modules}{\mathsf{Mod}}
\nc{\sset}{\mathsf{sSet}}
\nc{\liou}{\mathsf{Liou}}
\nc{\poset}{\mathsf{Poset}}
\nc{\trno}{T^*\RR^n_{\geq 0}}
\nc{\spectra}{\mathsf{Spectra}}
\nc{\tensorfin}{\tensor^{\fin}}
\nc{\lagptg}{\lag_{pt,pt}^{\cG}}
\nc{\Fin}{\mathcal{F}\mathsf{in}}
\nc{\lagnl}{\lag_{N,\Lambda}}
\nc{\lagmlg}{\lag_{M,\Lambda}^{\cG}}
\nc{\lagsplit}{\lag^{\mathsf{split}}}
\nc{\lagktimes}{(\lag^{\dd k})^\times}
\nc{\lagplanar}{\lag^{\times,\planar}}

%Symbols
\nc{\smsh}{\wedge}
\nc{\un}{\underline}
\nc{\xto}{\xrightarrow}
\nc{\xra}{\xto}
\nc{\tensor}{\otimes}
\nc{\del}{\partial}
\nc{\dd}{\diamond}
\nc{\tri}{\triangle}
\nc{\bb}{\Box}
\nc{\into}{\hookrightarrow}
\nc{\onto}{\twoheadrightarrow}
\nc{\contains}{\supset}
\nc{\transverse}{\pitchfork}
\nc{\uncirc}{\underline{\circ}}
% Bars
\nc{\Jbar}{\overline{J}}
\nc{\Fbar}{\overline{F}}
\nc{\delbar}{\overline{\del}}
\nc{\thetabar}{\overline{\theta}}
\nc{\omegabar}{\overline{\omega}}

%shortcuts
\nc{\colldiff}{\diff^{\del}} 
\nc{\trbar}{\overline{T^*\RR}}
\nc{\tr}{T^*\RR}
\nc{\tsa}{Ts\cA}
\nc{\tsb}{Ts\cB}
\nc{\cmbar}{\overline{\cM}}
\nc{\crbar}{\overline{\cR}}

%Other
\nc{\vece}{ {\vec \epsilon}}	
\nc{\vecd}{ {\vec \delta}}
\nc{\ov}{\overline}
\DMO{\op}{op}
\nc{\opp}{ ^{\op}}

% For leaving comments
\nc{\hiro}{\textcolor{blue}}

\nc{\eqn}{\begin{equation}}
\nc{\eqnn}{\begin{equation}\nonumber}
\nc{\eqnd}{\end{equation}}
\nc{\enum}{\begin{enumerate}}
\nc{\enumd}{\end{enumerate}}

%%%%%%%%%%%%%%%%%%%%%%%%%%%%%%%%%%%%%%%%%%
%%%%%%%%%%%%%%%%%%%%%%%%%%%%%%%%%%%%%%%%%%
%	End Hiro's macros
%%%%%%%%%%%%%%%%%%%%%%%%%%%%%%%%%%%%%%%%%%
%%%%%%%%%%%%%%%%%%%%%%%%%%%%%%%%%%%%%%%%%%
\def\cA{\mathcal A}\def\cB{\mathcal B}\def\cC{\mathcal C}\def\cD{\mathcal D}
\def\cE{\mathcal E}\def\cF{\mathcal F}\def\cG{\mathcal G}

\def\cM{\mathcal M}\def\cN{\mathcal N}\def\cP{\mathcal P}
\def\cR{\mathcal R}
\def\cW{\mathcal W}

\def\RR{\mathbb R}

\def\fC{\mathfrak C}

% ------------------- Title and Author -----------------------------
\title{Functors (between $\infty$-categories) that aren't strictly unital}
\author{Hiro Lee Tanaka}
\email{hirolee@math.harvard.edu}
\address{Department of Mathematics, Harvard University, Cambridge, MA}
\keywords{$\infty$-categories}
\subjclass[2010]{18A99, 55P99}
\begin{document}

\maketitle

\begin{abstract}
Let $\cC$ and $\cD$ be quasi-categories (a.k.a. $\infty$-categories). Suppose also that one has an assignment sending commutative diagrams of $\cC$ to commutative diagrams of $\cD$ which respects face maps, but not necessarily degeneracy maps. (This is akin to having an assignment which respects all compositions, but may not send identity morphisms to identity morphisms.) When does this assignment give rise to an actual functor? We show that if the original assignment can be shown to respect identity morphisms up to homotopy, then there exists an honest functor of $\infty$-categories which respects the original assignments up to homotopy. Moreover, we prove that such honest functors can be chosen naturally with respect to the original assignments.
\end{abstract}

\tableofcontents

\section{Introduction}
$\infty$-categories, also known as quasi-categories~\cite{joyal} and weak Kan complexes~\cite{boardman-vogt}, are among the most useful models for $(\infty,1)$-categories. However, like any model, there is at least one shortcoming that can be a hindrance in practice. Here is one: Functors between $\infty$-categories must, {\em strictly} speaking, respect units. This short work presents a work-around.

Concretely: Let $\cC: \Delta^{\op} \to \sets$ be an $\infty$-category, and in particular, a simplicial set. Heuristically, the degeneracy map $s_0: \cC_0 \to \cC_1$ picks out a specific unit for each object of $\cC$. Moreover, a functor $F:\cC \to \cD$ between $\infty$-categories must be a natural transformation of simplicial sets, meaning that $F$ must in particular commute with all degeneracy maps on the nose:
	\eqnn
		F \circ s_i = s_i \circ F.
	\eqnd
This strictness presents issues both philosophical and practical. Philosophically, whether a map between algebras or categories respects units is a property, not additional structure---when there is a contractible space of units, we shouldn't have to say that some arbitrary choices of units are respected.

Practically, one would like to say that an algebra map or a functor that respects specified units {\em up to homotopy} should define a functor of $\infty$-categories without too much more work. Here are two examples in which one might encounter such an issue:
\enum
	\item
		When trying to translate one homotopical model into the model of $\infty$-categories. For example, if one fixes a base ring $k$, the notion of a unital $A_\infty$-category over $k$ does not require a specific choice of unit. However, one can construct a functor from the $A_\infty$-category of $A_\infty$-categories to the $\infty$-category of $\infty$-categories. This is called the $A_\infty$-nerve, or simplicial nerve construction for $A_\infty$-categories, and was developed by Faonte~\cite{faonte, faonte-2} and by the present author~\cite{tanaka-thesis}. An immediate annoyance arises: Which degeneracy maps do we choose for the nerve, when there is no specified unit for the $A_\infty$ category? The seasoned homotopy theorist knows in her heart of hearts that this should present no issue, as any two choices should be equivalent up to contractible choice. This instinct must be made precise. Also, as noted in~\cite{tanaka-thesis}, there are other work-arounds to this that do not involve the present work by using formal diffeomorphisms of an $A_\infty$-category, but such choices are cumbersome to make functorial for all $A_\infty$-categories at once.
	\item
		When trying to construct a functor between $\infty$-categories. Especially when one considers functors having geometric origins, it takes a lot of extra work to create a functor which respects units on the nose. Even worse---if we can construct a map that seems to respect some of the degeneracy maps, it may be the case that other degeneracy maps are not respected! We would rather be lazy about it, and say that if a putative functor ``$F: \cC \to \cD$'' respects all face maps and respects enough degeneracy maps, then we know that there is an honest map of simplicial sets $F' : \cC \to \cD$ somehow equivalent to $F$. This is the main motivation behind the present work. We use our result here to help prove the main theorem in~\cite{tanaka-pairing}, where we construct a pairing between an $\infty$-category of Lagrangian cobordisms of a Liouville domain with various versions of its Fukaya category.
\enumd

We now build up to our main result, Theorem~\ref{thm.localization}, which indeed allows us to infer the existence of a functor between $\infty$-categories whenever one has a putative functor which respects compositions and ``respects units enough.''

Let $\Delta$ be the category of finite, non-empty, linearly ordered sets. Consider the subcategory 
	\eqnn
	j: \Delta_{\inj} \into \Delta
	\eqnd
consisting of the same objects, but whose morphisms are the {\em injective} poset maps. Recall that a functor $\Delta_{\inj}^{\op} \to \sets$ is called a {\em semisimplicial set}. A map of semisimplicial sets is a natural transformation.

In what follows, given an $\infty$-category $\cC: \Delta^{\op} \to \sets$, we will let $j^*\cC = \cC_{\inj}$ denote the composition $\Delta^{\op}_{\inj} \to \Delta^{\op} \to \sets$. The functor $\cC \mapsto \cC_{\inj}$ has a left adjoint, and we consider the counit of the adjunction:
	\eqn\label{eqn.counit}
	\epsilon: \cC_+ \to \cC
	\eqnd
We give more details on $\cC_+$ and this counit map in Section~\ref{section.counit}. As we will explain there, $\cC_+$  contains a set of non-degenerate edges which is naturally identified with the set of degenerate edges of $\cC$---we call this collection of edges $s_0(\cC_0)$.

Our main technical result is:

\begin{theorem}\label{thm.localization}
The counit $\epsilon: \cC_+ \to \cC$ exhibits $\cC$ as a localization of $\cC_+$ along the edges $s_0(\cC_0)$ in $\cC_+$.
\end{theorem}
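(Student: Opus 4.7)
The plan is to verify the universal property of $\infty$-categorical localization directly: for every quasi-category $\cD$, precomposition with $\epsilon$ should induce an equivalence $\epsilon^* : \Fun(\cC,\cD) \xrightarrow{\sim} \Fun(\cC_+,\cD)_W$, where $W = s_0(\cC_0)$ and the target denotes the full sub-$\infty$-category of functors sending $W$-edges to equivalences in $\cD$. First I would unpack $\cC_+ = j_! j^* \cC$ via Eilenberg--Zilber: an $n$-simplex of $\cC_+$ is a pair $(\sigma, \pi)$ with $\pi : [n] \onto [k]$ a surjection in $\Delta$ and $\sigma \in \cC_k$, and $\epsilon(\sigma, \pi) = \pi^* \sigma$. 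The elements of $W$ are precisely the non-degenerate edges $(s_0(x), \id_{[1]})$, each mapped by $\epsilon$ to the identity edge $s_0(x) \in \cC_1$, so $\epsilon^*$ automatically factors through $\Fun(\cC_+, \cD)_W$.

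For essential surjectivity, given $F : \cC_+ \to \cD$ sending $W$-edges to equivalences, I would construct $\tilde F : \cC \to \cD$ with $\tilde F \circ \epsilon$ equivalent to $F$, by induction on skeleta of $\cC$. The key difficulty is that distinct simplices of $\cC_+$ can lie above the same simplex of $\cC$; for instance both $(s_0(x), \id_{[1]})$ and $(x, [1] \onto [0])$ lie above $s_0(x) \in \cC_1$, and $F$ may send them to distinct (but equivalent) edges of $\cD$. I would rectify $F$ along these $W$-equivalences using contractibility of the Kan subcomplex of self-equivalences of a fixed object in $\cD$, realized concretely as a lifting problem against a trivial Kan fibration at each skeletal stage. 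Fully faithfulness then follows by replacing $\cD$ with the cotensor $\Fun(\Delta^n, \cD)$ and running the same argument, thereby comparing mapping spaces in the two functor $\infty$-categories.

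I expect the main obstacle to be carrying out the rectification coherently across all skeletal stages at once: each lift must simultaneously identify degenerate and non-degenerate preimages in $\cC_+$, while respecting face relations already fixed on lower skeleta. I would hope to handle this by packaging all the choices as a single cofibration-versus-trivial-fibration lifting problem, either in the Joyal model structure on simplicial sets or in the model category of marked simplicial sets with the marking on $\cC_+$ given by $W$ and the marking on $\cC$ given by equivalences; in that framework the contractibility of the space of lifts is obtained formally from the weak equivalence status of $\epsilon$, rather than constructed by hand simplex by simplex.
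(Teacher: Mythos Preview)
Your proposal has a genuine gap, and in fact two related ones.

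First, the claimed ``contractibility of the Kan subcomplex of self-equivalences of a fixed object in $\cD$'' is false as stated: the space of self-equivalences of $d \in \cD$ is $\aut(d)$, which need not be contractible. What actually makes the rectification possible in low dimensions is not any abstract contractibility but a specific relation present in $\cC_+$: the non-degenerate $2$-simplex $(s_0 s_0 x, \id_{[2]})$ has all three edges equal to the $W$-edge $e = (s_0 x, \id_{[1]})$, so $F$ produces a $2$-simplex in $\cD$ witnessing $F(e) \circ F(e) \simeq F(e)$. Combined with invertibility of $F(e)$, this forces $F(e) \simeq \id_{F(x)}$. It is this idempotent structure, not a general contractibility statement, that drives the argument; extending it coherently to all higher degeneracies is exactly the hard part you defer.

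Second, and more seriously, your fallback is circular. You propose to package the coherence as a lifting problem in marked simplicial sets, with $(\cC_+, W)$ as source and $\cC$ with its equivalences marked as target, and then invoke ``the weak equivalence status of $\epsilon$'' to conclude. But in the Cartesian model structure on marked simplicial sets, the assertion that $\epsilon$ is a weak equivalence between these marked objects is \emph{equivalent} to the assertion that $\epsilon^*: \fun(\cC, \cD)^\sim \to \fun_W(\cC_+, \cD)^\sim$ is an equivalence for all quasi-categories $\cD$, which is precisely the theorem you are trying to prove. You cannot assume it as input.

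For comparison, the paper proceeds quite differently: it first reduces to the case $\cC = \Delta^k$ (both $\cC \mapsto \cC_+[s_0(\cC_0)^{-1}]$ and $\cC \mapsto \cC$ are compatible with the colimits expressing an arbitrary simplicial set in terms of simplices), then models the localization explicitly as a pushout $\cP$ and shows $\fC[\cP] \to \fC[\Delta^k]$ is an equivalence of simplicially enriched categories by computing mapping spaces via the Dugger--Spivak necklace model. The case $k=0$ recovers exactly the idempotent argument sketched above; the content for $k \geq 1$ is a finality argument identifying the relevant necklace category with a product of copies of $\Delta_{\inj}$, whose nerve has contractible realization.
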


To prove the theorem, it suffices to prove it when $\cC$ is a $k$-simplex for $k\geq 0$. We prove the case $k=0$ in Section~\ref{section.k=0}, and the general case in Section~\ref{section.k>0}.

\begin{remark}
In general, $\cC_+$ is not an $\infty$-category (see Remark~\ref{remark.non-quasi}); regardless, it makes sense to speak of a localization of $\cC_+$---it is the localization of any fibrant replacement $\cC_+ \simeq \tilde \cC_+$ along the image of $s_0(\cC_0)$.
\end{remark}

\begin{remark}
The theorem further implies that the localization $\epsilon$ doesn't merely send $e \in s_0(\cC_0)$ to some invertible edge, it sends $e$ to an edge {\em homotopic to the identity} of an object. See Remark~\ref{remark.idempotent}.
\end{remark}
 
Let $\cC$ and $\cD$ be $\infty$-categories. Fix $F: \cC_{inj} \to \cD_{inj}$ a map of semisimplicial sets. By Theorem~\ref{thm.localization}, we obtain the following result:

\begin{theorem}\label{thm.semisimplicial}
Assume that for every vertex $x \in \cC_0$, $F(s_0 x)$ is an equivalence in $\cD$. Then there exists a functor $F': \cC \to \cD$ (i.e., a map of simplicial sets) such that for every simplex $Y$ of $\cC$, there is a homotopy in $\cD$ from $F(Y)$ to $F'(Y)$. Moreover, the data of such an $F'$ and of a coherent choice of such homotopies is unique up to contractible choice of natural equivalence.
\end{theorem}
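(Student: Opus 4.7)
The plan is to reduce to Theorem~\ref{thm.localization} via the adjunction $j_! \dashv j^*$. A map of semisimplicial sets $F: \cC_{\inj} \to \cD_{\inj}$ is, by adjunction, the same data as a map of simplicial sets $\tilde F: \cC_+ = j_! j^* \cC \to \cD$. Tracing through the construction, the edge $s_0 x \in (\cC_+)_1$---which, as recalled before the theorem, is the non-degenerate edge of $\cC_+$ naturally identified with the degenerate edge $s_0 x$ of $\cC$---is sent by $\tilde F$ to $F(s_0 x) \in \cD_1$. The hypothesis of the theorem therefore says precisely that $\tilde F$ carries every edge in $s_0(\cC_0) \subset \cC_+$ to an equivalence in $\cD$.

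By Theorem~\ref{thm.localization}, the counit $\epsilon : \cC_+ \to \cC$ exhibits $\cC$ as the localization along $s_0(\cC_0)$. The universal property says that restriction along $\epsilon$ induces an equivalence
\[
\epsilon^* : \fun(\cC, \cD) \xrightarrow{\simeq} \fun^{s_0(\cC_0)}(\cC_+, \cD),
\]
where the target is the full subcategory of functors sending each edge of $s_0(\cC_0)$ to an equivalence. Since $\tilde F$ lies in the target, we may choose a preimage $F': \cC \to \cD$ together with an equivalence $\alpha: F' \circ \epsilon \simeq \tilde F$ in $\fun(\cC_+, \cD)$; moreover, the space of such pairs $(F', \alpha)$ is the fiber of an equivalence of $\infty$-categories, hence contractible, which will yield the uniqueness-up-to-contractible-choice claim.

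It remains to extract, for each simplex $Y \in \cC_n$, a homotopy in $\cD$ between $F(Y)$ and $F'(Y)$, coherent in $Y$. Via the unit of the adjunction $j_! \dashv j^*$, each $Y \in \cC_n$ lifts to a canonical simplex $Y_+ \in (\cC_+)_n$ satisfying $\epsilon(Y_+) = Y$ and $\tilde F(Y_+) = F(Y)$. Evaluating $\alpha$ on $Y_+$ then supplies the desired homotopy between $F(Y) = \tilde F(Y_+)$ and $F'(Y) = F'(\epsilon(Y_+))$, and naturality of $\alpha$ in $Y_+$ provides the coherence. The only substantive input is the invocation of Theorem~\ref{thm.localization}; modulo that, the argument is a formal consequence of the adjunction and the universal property of localization, and I do not anticipate a serious obstacle beyond carefully packaging the data $(F', \alpha)$ so that its space of choices is identified with the relevant fiber of $\epsilon^*$.
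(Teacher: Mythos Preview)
Your proposal is correct and follows essentially the same route as the paper: pass to $\tilde F:\cC_+\to\cD$ via the adjunction, observe the hypothesis says $\tilde F$ inverts $s_0(\cC_0)$, and then invoke Theorem~\ref{thm.localization} and the universal property of localization to factor through $\epsilon$. The only cosmetic difference is that the paper makes explicit the fibrant replacement $\cC_+\xrightarrow{\sim}\tilde\cC_+$ (since $\cC_+$ is not a quasi-category, cf.\ Remark~\ref{remark.non-quasi}) before applying the universal property, whereas you absorb that step into the statement that $\epsilon^*:\fun(\cC,\cD)\to\fun^{s_0(\cC_0)}(\cC_+,\cD)$ is an equivalence; this is fine, since $\fun(-,\cD)$ sends Joyal weak equivalences between cofibrant objects to categorical equivalences when $\cD$ is fibrant. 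Your final paragraph, lifting each simplex $Y$ to $Y_+$ via the unit and evaluating $\alpha$ there, is a nice explicit unpacking of what the paper leaves implicit in its homotopy-commutative triangle.
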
   

\begin{proof}[Proof of Theorem~\ref{thm.semisimplicial} assuming Theorem~\ref{thm.localization}]
By the Joyal model structure on simplicial sets, the counit map $\epsilon$ factors into an acyclic cofibration $\alpha: \cC_+ \simeq \tilde \cC_+$, followed by a fibration $\tilde \cC_+ \to \cC$, as depicted in the left half of the following diagram:
	\eqnn
		\xymatrix{
			\cC_+ \ar[d]_{\alpha}^{\sim} \ar[r]^{F_+} \ar@/_2em/[dd]_{\epsilon}
				& \cD \\
			\tilde \cC_+ \ar@{->>}[d] \ar[ur] \\
			\cC \ar@{-->}[uur]_{F'}
			&.
		}	
	\eqnd
Note that $\tilde \cC_+$ is a a quasi-category. (Here we are using the assumption that $\cC$ itself is a quasi-category.)

The semisimplicial map $F: \cC_{\inj} \to \cD_{\inj}$ defines a map of simplicial sets $F_+: \cC_+ \to \cD$ by adjunction. Since $\cC_+$ is cofibrant, $F_+$ may also be factored by the {\em same} acyclic cofibration, followed by a fibration to $\cD$, up to homotopy. The top, solid triangle in the diagram above depicts the corresponding 2-simplex in the $\infty$-category of $\infty$-categories; i.e., a 2-simplex exhibiting the homotopy from $F_+$ to the composite $\cC_+ \to \tilde \cC_+ \to \cD$. Note that given $F_+$, the space of such triangles is contractible---this is evidenced by examining the homotopy fibers above $F_+$ of the homotopy equivalence of Kan complexes
	\eqnn
		\alpha^*:
		\fun(\tilde \cC_+, \cD)^\sim
		\to
		\fun(\cC_+, \cD)^\sim.
	\eqnd
(Here, $X^\sim$ denotes the largest Kan complex contained in a simplicial set $X$. Recall that $\hom(-,-) := \fun(-,-)^\sim$ is the usual enrichment rendering Lurie's Cartesian model structure a simplicial model category, and that Lurie's Cartesian model structure for marked simplicial sets is equivalent to Joyal's model structure. We conclude that $\alpha^*$ is an equivalence as $\alpha$ is an equivalence in either model structure, both domains are cofibrant, and the codomain $\cD$ is fibrant.)

Now we seek a dashed map of simplicial sets $F'$ as indicated, along with a 2-simplex of quasi-categories rendering the righthand, bottom triangle a homotopy-commutative diagram. By Theorem~\ref{thm.localization} and the universal property of localization, we know that such a homotopy-commutative triangle with the dashed arrow exists if and only if the map $\tilde \cC_+ \to \cD$ satisfies the property that the edges $s_0(\cC_0) \subset (\cC_+)_1 \to \tilde \cC_+$ are sent to equivalences in $\cD$. This is the hypothesis of Theorem~\ref{thm.semisimplicial}. 

Finally, to see the uniqueness of the whole diagram, let $\fun_W(\tilde \cC_+, \cD)^\sim \subset \fun(\tilde \cC_+, \cD)^\sim$ be the inclusion of the connected components of those functors sending $s_0(\cC_0)$ to equivalences in $\cD$. By the universal property of localization, the pre-composition map
	\eqnn
		\fun(\cC,\cD)^\sim
		\to
		\fun_W(\tilde \cC_+, \cD)^\sim
	\eqnd
is a homotopy equivalence. This completes the proof.
\end{proof}

\begin{remark}
Note that a map of semisimplicial sets $F: \cC_{\inj} \to \cD_{\inj}$ is precisely an assignment which respects homotopy-coherent diagrams, but may not respect choices of units on the nose. Returning to our original motivation, the above theorem indeed says that such an assignment which {\em does} send degenerate edges to equivalences canonically determines a functor from $F': \cC \to \cD$, along with a homotopy from that functor to the original assignment $F$.
\end{remark}

Moreover, an application of this theorem tells us that the assignment $F \mapsto F'$ (which is well-defined up to contractible choice) can be made appropriately natural in $F$. 
Let $F : \cC_{\inj} \times \Delta^k_{\inj} \to \cD_{\inj}$ be a map of semisimplicial sets, and let $d_i F$, $0 \leq i \leq k$ be the restriction of $F$ to the various faces of $\cC_{\inj} \times \Delta^{k-1}_{\inj} \subset \cC_{\inj} \times \Delta^k_{\inj}$.

\begin{corollary}
Assume that for every vertex $x \in \cC_0 \times \Delta^k_0$, $F(s_0(x))$ is an equivalence in $\cD$. Then $F': \cC \times \Delta^k \to \cD$ induces a homotopy coherent diagram with faces $(d_i F)': \cC \times \Delta^{k-1} \to \cD$. In particular, when $k=1$, $F'$ induces a natural transformation from $(d_1 F)'$ to $(d_0 F)'$. 
\end{corollary}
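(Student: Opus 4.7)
The plan is to reduce the corollary to a single application of Theorem~\ref{thm.semisimplicial} combined with the exponential adjunction for simplicial sets. First, I would observe that the functor $\cC \mapsto \cC_{\inj}$ preserves products, so there is a canonical identification $(\cC \times \Delta^k)_{\inj} = \cC_{\inj} \times \Delta^k_{\inj}$; consequently, $F$ may be viewed as a semisimplicial map out of $(\cC \times \Delta^k)_{\inj}$. Since $\cC \times \Delta^k$ is a quasi-category (a product of two quasi-categories), and the hypothesis that $F(s_0 x)$ is an equivalence at every vertex $x \in \cC_0 \times \Delta^k_0$ is exactly what Theorem~\ref{thm.semisimplicial} needs, applying that theorem produces a functor $F'\colon \cC \times \Delta^k \to \cD$, unique up to contractible choice of natural equivalence, together with a coherent homotopy from $F$ to the underlying semisimplicial map of $F'$.

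Next, I would reinterpret $F'$ under the exponential adjunction $\fun(\cC \times \Delta^k, \cD) \cong \fun(\Delta^k, \fun(\cC, \cD))$ as a $k$-simplex of the $\infty$-category $\fun(\cC, \cD)$; this $k$-simplex \emph{is} the advertised homotopy coherent diagram. By construction, its $i$-th face is the functor $F'|_{\cC \times \delta_i\Delta^{k-1}}$ obtained by restricting $F'$ along the $i$-th face inclusion $\delta_i\colon \Delta^{k-1} \to \Delta^k$. In particular, for $k=1$ such a $1$-simplex in $\fun(\cC,\cD)$ is precisely the data of a natural transformation with source the $d_1$-face and target the $d_0$-face, which settles the final sentence of the corollary.

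The remaining step is to identify each face of $F'$ with the corresponding $(d_i F)'$. Restriction along $\delta_i$ commutes with the forgetful functor to semisimplicial sets, so the underlying semisimplicial map of the $i$-th face of $F'$ is homotopic to $d_i F$ via the restriction of the coherent homotopy produced above. On the other hand, applying Theorem~\ref{thm.semisimplicial} directly to $d_i F$ produces $(d_i F)'$ together with an analogous homotopy. By the uniqueness clause of Theorem~\ref{thm.semisimplicial}, the $i$-th face of $F'$ and $(d_i F)'$ are canonically naturally equivalent, which completes the identification.

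The main obstacle I anticipate is the bookkeeping of coherence: one must verify that the equivalences $F'|_{\cC \times \delta_i\Delta^{k-1}} \simeq (d_i F)'$ fit together compatibly as $i$ varies, rather than forming an unrelated family indexed by faces. This is not entirely trivial, but it is already packaged inside the uniqueness-up-to-contractible-choice clause of Theorem~\ref{thm.semisimplicial}: since the relevant spaces of lifts are contractible, the face-restriction maps between them are automatically coherent, and no additional construction by hand is required.
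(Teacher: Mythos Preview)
Your proposal is correct and follows essentially the same route as the paper: apply Theorem~\ref{thm.semisimplicial} to the product $\cC \times \Delta^k$, restrict the resulting $F'$ along each face inclusion, and then invoke the uniqueness clause (equivalently, the universal property of the localization) to identify $d_i(F')$ with $(d_iF)'$ before gluing these equivalences onto the faces of $F'$. The paper phrases the restriction step via an explicit strictly commuting square of counit maps rather than the exponential adjunction, and appeals directly to the universal property of localization rather than the packaged uniqueness statement, but these are cosmetic differences.
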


\begin{proof}
Applying Theorem~\ref{thm.semisimplicial} to $F$, we know that given $F_+$ and $\epsilon$, the space of homotopy-commutative triangles
	\eqnn
		\xymatrix{
			(\cC \times \Delta^k)_+ \ar[r]^-{F_+}  \ar[d]_\epsilon & \cD \\
			\cC \times \Delta^k \ar[ur]_{F'}
		}
	\eqnd
is contractible. On the other hand, for any choice of $0 \leq i \leq k$, we have a strictly commuting square:
	\eqnn
		\xymatrix{
			(\cC \times \Delta^{k-1})_+ \ar[rr]^{(\id_\cC \times d_i)_+} \ar[d]_\epsilon && (\cC \times \Delta^k)_+ \ar[d]^\epsilon \\
			\cC \times \Delta^{k-1} \ar[rr]^{\id_\cC \times d_i} && \cC \times \Delta^k
		}
	\eqnd
where $d_i$ is the inclusion of the $i$th face. Gluing along the righthand copy of $\epsilon$, or by applying the theorem to $d_i F$, we have two homotopy-commutative triangles
	\eqnn
		\xymatrix{
		(\cC \times \Delta^{k-1})_+ \ar[rr]^{d_i F_+} \ar[d]_\epsilon && \cD \\
		\cC \times \Delta^{k-1} \ar[urr]_{d_i (F')} 
		}
		\qquad
		\xymatrix{
		(\cC \times \Delta^{k-1})_+ \ar[rr]^{d_i F_+} \ar[d]_\epsilon && \cD \\
		\cC \times \Delta^{k-1} \ar[urr]_{(d_i F)'} 
		}
	\eqnd
Again by the universal property of localizations, we conclude that there is a contractible space of natural equivalence between any choice of $(d_i F)'$, and $d_i (F')$; gluing these natural equivalences to the faces of $F'$, we obtain a $k$-simplex whose faces are given by $(d_i F)'$.
\end{proof}

\begin{remark}
Shortly after our work, Steimle~\cite{steimle} proved a result that is a ``producing $\infty$-categories'' analogue of our ``producing maps between $\infty$-categories'' result. Specifically, Steimle shows that if a semisimplicial set $\underline{\cC}$ satisfies the weak Kan condition, and if every vertex of $\underline{\cC}$ admits an idempotent self-equivalence, then there exists a simplicial set $\cC$ such that $\cC_{inj} = \underline{\cC}$; the degeneracy $s_0: \cC_0 \to \cC_1$ can be any assignment picking out idempotent self-equivalences. Moreover, $\cC$ will automatically be a quasi-category, and any two choices of $\cC$ are equivalent as quasi-categories. (This is in line with our initial philosophy of unitality being a property, not a specified structure.)

By putting our results together, one can significantly simplify the efforts of constructing quasi-categories, and of constructing maps between them. Given a semisimplicial map $F: \underline{\cC} \to \underline{\cD}$, all one needs to verify is that, for every object $X$, some idempotent self-equivalence of $X$ is sent to some idempotent self-equivalence of $F(X)$. Then one naturally deduces the existence of a functor $\cC \to \cD$. Note this is a valid strategy even if the original $F$ does not respect some other choice of degeneracy maps for $\cC$ and $\cD$.
\end{remark}

{\bf Acknowledgments.} 
We thank Jacob Lurie for suggesting the proof method presented here, which simplifies (and makes functorial) an earlier method we had: One can construct a coCartesian fibration over $\Delta^1$ having $\cC$ and $\cD$ as fibers. We don't display that method here because of the obvious advantages of the present method. We also thank Clark Barwick for catching a mistake in an earlier draft, and to the editors and anonymous referees for very helpful comments. This work was conducted while I was supported by the National Science Foundation under Award No. DMS-1400761.

\section{Preliminaries}
Below, when we say {\em category}, we mean category in the usual sense of Eilenberg and Mac Lane~\cite{eilenberg-maclane}, and when we say $\infty$-category, a simplicial set satisfying the weak Kan condition. 

\begin{remark}
To make it easier for the reader to look up references, we remark on the history of the term ``semisimplicial set.'' For some time, the notion of a simplicial set was called a ``complete semi-simplicial complex,'' and what we call a semisimplicial set was called a ``semi-simplicial complex.'' This is due, as far as we know, to the original definitions in~\cite{eilenberg-zilber}. Our use of the prefix semi follows~\cite{htt}, and refers to the absence of identities/degeneracies; the original use of the prefix~\cite{eilenberg-zilber} most likely referred to the fact that a simplex is not determined by its vertices.
\end{remark}

\subsection{The counit of the adjunction and the edges $s_0(\cC_0)$}\label{section.counit}
We let $j:\Delta^{\op}_{\inj} \to \Delta^{\op}$ denote the inclusion. Note that there is an adjunction
	\eqnn
		\xymatrix{
			\fun(\Delta^{\op}_{\inj} , \sets)			\ar@/^/[r]^{j_!}
			& \ar@/^/[l]^{j^*} \fun(\Delta^{\op},\sets)
		}
	\eqnd
where $j^*$ is pre-composition with $j$ and $j_!$ is left Kan extension. If $\cC$ is a quasi-category, let $\cC_+$ denote the simplicial set obtained by first restricting $\cC$ along $\Delta^{\inj}$, then left Kan extending:
	\eqnn
		\xymatrix{
			\Delta^{\op} \ar@{-->}[r]^{\cC_+} & \sets \\
			\Delta_{\inj}^{\op} \ar[u] \ar[ur]_{\cC_{\inj}}.
		}
	\eqnd
It exists since $\sets$ has all small colimits, and by the universal property of left Kan extensions, there is a natural map (the counit of the adjunction):
	\eqn
		\cC_+ \to \cC.
	\eqnd

\begin{lemma}[Description of $\cC_+$]
The set of $N$-simplices of $\cC_+$ is given by the formula
	\eqn\label{eqn.C+}
		(\cC_+)_N
		\cong
		\coprod_{s: [N] \onto [N']} \cC_{N'}.
	\eqnd
That is, the $N$-simplices of $\cC_+$ contain a copy of $\cC_{N'}$ for each surjection $[N] \onto [N']$. The non-degenerate simplices of $\cC_+$ are precisely those for which the surjection $s: [N] \onto [N']$ is a bijection (i.e., the identity morphism).
\end{lemma}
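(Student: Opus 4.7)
The plan is to apply the pointwise formula for the left Kan extension $j_!$ and then exploit the Eilenberg--Zilber (epi--mono) factorization system on $\Delta$. By construction, $\cC_+$ is the left Kan extension of $\cC_{\inj}$ along $j^{\op} : \Delta_{\inj}^{\op} \to \Delta^{\op}$, so the pointwise formula gives
\eqnn
(\cC_+)_N \;\cong\; \colim_{([M],\,f:[N]\to [M])} \cC_{\inj}([M]),
\eqnd
where the colimit is indexed by the comma category whose objects are pairs consisting of $[M] \in \Delta_{\inj}$ together with a morphism $f:[N]\to [M]$ in $\Delta$, and whose morphisms from $([M],f)$ to $([M'],f')$ are injections $g:[M]\to [M']$ satisfying $g\circ f = f'$.

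The next step is to decompose this indexing category. Every morphism $f:[N]\to [M]$ in $\Delta$ factors uniquely as $f = \iota\circ s$ with $s:[N]\twoheadrightarrow [N']$ a surjection and $\iota:[N']\hookrightarrow [M]$ an injection. Because the morphisms in the indexing category are injections and hence left-cancellative across surjections, the assignment $([M],f)\mapsto s$ is a (constant on connected components) functor to the discrete set of surjections out of $[N]$. The fiber above a fixed surjection $s:[N]\twoheadrightarrow [N']$ is the full subcategory on those $([M],\iota\circ s)$ with $\iota:[N']\hookrightarrow [M]$; this fiber has the initial object $([N'],\id)$ corresponding to $\iota = \id_{[N']}$. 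Therefore the colimit over each fiber is $\cC_{\inj}([N']) = \cC_{N'}$, and assembling over all surjections yields the asserted formula \eqref{eqn.C+}.

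To identify the non-degenerate simplices, I would unwind the simplicial operators inherited from the left Kan extension: given $\phi:[N']\to [N]$ in $\Delta$ and an $N$-simplex $(s:[N]\twoheadrightarrow [M],\,x\in \cC_M)$ of $\cC_+$, factor the composite $s\circ\phi$ as $\iota'\circ s'$ with $s'$ a surjection and $\iota'$ an injection; then the induced map acts by $\phi^*(s,x) = (s',\,(\iota')^*x)$. Specializing to a degeneracy $\sigma:[N]\twoheadrightarrow [N-1]$, the composite $s\circ\sigma$ is already surjective, so $\iota' = \id$ and $\sigma^*(s',x) = (s'\circ\sigma, x)$. Consequently, $(s,x)$ is degenerate if and only if $s$ factors through some nonidentity surjection out of $[N]$, i.e.\ if and only if $s$ is not a bijection.

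The only delicate step is the decomposition of the indexing comma category in the second paragraph; once one checks that an injection $g:[M]\to [M']$ with $g\circ\iota\circ s = \iota'\circ s$ forces $g\circ \iota = \iota'$ (using surjectivity of $s$), everything else is formal. The remaining work is routine bookkeeping with the Kan-extension formulas for the simplicial operators.
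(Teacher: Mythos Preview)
Your approach is exactly the paper's: apply the pointwise left Kan extension formula and use the epi--mono factorization in $\Delta$ to see that the surjections out of $[N]$ index a discrete subcategory over which the colimit is computed; the paper compresses this into one sentence (``a final, discrete subcategory of all objects coming from surjective maps''), while you spell out the factorization and also supply the description of the simplicial operators and the non-degenerate simplices (which the paper records only in the subsequent Remarks).

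There is, however, a directional slip you should fix. Since $\cC_+ = j_! \cC_{\inj}$ is a left Kan extension along $j^{\op}:\Delta_{\inj}^{\op}\to\Delta^{\op}$, the indexing category at $[N]$ is $(j^{\op}\downarrow[N])$: its objects are pairs $([M],f:[N]\to[M])$ as you wrote, but a morphism $([M],f)\to([M'],f')$ is a morphism $[M]\to[M']$ in $\Delta_{\inj}^{\op}$, i.e.\ an injection $g:[M']\hookrightarrow[M]$ with $g\circ f'=f$ --- the opposite of what you wrote. Consequently each fiber over a surjection $s:[N]\twoheadrightarrow[N']$ has $([N'],s)$ as a \emph{terminal} object (not initial), and it is terminality that forces the colimit over that fiber to be $\cC_{N'}$; an initial object would only compute a limit. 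In your write-up the two reversals cancel and the conclusion is correct, but as stated the sentence ``this fiber has the initial object \ldots\ therefore the colimit over each fiber is $\cC_{N'}$'' is not valid reasoning. Once you swap the direction of $g$ and replace ``initial'' by ``terminal'' (equivalently: the discrete subcategory of surjections is \emph{final} in the correct comma category), the argument is complete and matches the paper.
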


\begin{proof}
By the usual formula for left Kan extension, the set $(\cC_+)_N$ is computed as the colimit of $\cC_{\inj}$ over the diagram category $(\Delta^{\op}_{\inj})_{/ [N]}$. This diagram category has a final, discrete subcategory of all objects coming from surjective maps $s: [N] \onto [N']$ in $\Delta$. This gives \eqref{eqn.C+}.
\end{proof}

\begin{remark}
Equation~\eqref{eqn.C+} agrees with the intuition that $\cC_+$ freely adjoins degeneracies to $\cC$.
\end{remark}

\begin{remark}[The simplicial maps of $\cC_+$]
Now assume we are given an element of $(\cC_+)_N$, which we think of as a pair $(S,s)$ with $S \in \cC_{N'}$ and $s: [N] \onto [N']$. Given any poset map $f: [M] \to [N]$, the induced function $(\cC_+)_N \to (\cC_+)_M$ is as follows: there is a unique factorization of $s \circ f$
	\eqnn
		\xymatrix{
			[M] \ar[r]^f  \ar@{->>}[d]^{s'} & [N] \ar@{->>}[d]^{s} \\
			[M'] = \im(s \circ f) \ar[r]^-{f'} & [N']
		}
	\eqnd
where $s'$ is a surjection. The map $f'$ is an injection, so $\cC_{inj}$ tells us how to transport an element of $\cC_{N'}$ to an element of $\cC_{M'}$, and we consider this as the $s'$ component of $(\cC_+)_M$. This completes our description of $\cC_+$.
\end{remark}

\begin{remark}
We can explicitly describe the simplicial set maps associated to $\Delta^k_+$. That is, for any poset map
$g: [m] \to [\un{m}]$, we describe the induced map
	\eqnn
		(f,s) \mapsto (g^*f, g^*s),
		\qquad
		(\Delta^k_+)_{m} \to (\Delta^k_+)_{\un{m}}.
	\eqnd
We note that there is a unique surjection $\un{s}$, a unique injection, and a unique map $\un{f}$ fitting into the diagram below:
	\eqnn
		\xymatrix{
			[k] 
				& [m'] \ar[l]_{f}
				& [m] \ar@{->>}[l]_{s}  \\
			[k]\ar@{=}[u]
				& [\un{m}'] \ar[l]_{\un{f}} \ar@{^{(}->}[u]
				& [\un{m}] \ar@{->>}[l]_{\un{s}} \ar[u]^g
		}
	\eqnd
It is obtained by setting $[m'] \cong \im(s \circ g)$. We let
	\eqnn
		g^*(f,s) := (g^*f, g^*s) := (\un{f},\un{s}).
	\eqnd
\end{remark}

\begin{remark}\label{remark.non-quasi}
The simplicial set $\cC_+$ is almost never a quasi-category. While any inner 2-horn is fillable, not every 3-horn is. For example, consider an inner horn $\Lambda^3_1 \to \cC_+$ with subsimplices we label as follows:
\enum
\item Vertices $x_0,\ldots, x_3$,
\item Edges $f_{ij} : x_i \to x_j$, and
\item 2-simplices $T_{012}$, $T_{013}$, $T_{123}$. We further assume
\item $T_{123} = s_1^{\cC_+}(f_{12})$. (This necessarily implies that $f_{12} = f_{13}$, and that $f_{23} = s_0^{\cC_+}(x_2)$.)
\item $T_{012}$ and $T_{013}$ are 2-simplices in $\cC$, and
\item $f_{02} \neq f_{03}$. (This implies that $\cC$ is not the nerve of a category.)
\enumd
There is no 3-simplex filling such a horn because there is no 2-simplex in $\cC_+$ with boundary conditions given by edges $f_{02}$, $f_{03}$, and $s_0^{\cC_+}(x_2)$ while $f_{02} \neq f_{03}$.

However, if the original simplicial set is isomorphic to the nerve of some small category, $\cC_+$ is a quasi-category. To see this, one can observe that any sequence of $k$ composable edges has a unique $k$-simplex with those edges as boundary edges.

\end{remark}

Finally, let us explain the set $s_0(\cC_0)$ along which we must localize. By using the Joyal model structure, the counit map~\eqref{eqn.counit} factors as 
	\eqnn
		\cC_+ \to \tilde \cC_+ \to \cC
	\eqnd
where the lefthand arrow is an acyclic cofibration, and the righthand arrow to $\cC$ is an inner fibration. Since we have assumed $\cC$ is a fibrant object (i.e., a quasi-category), $\tilde \cC_+$ is a fibrant replacement for $\cC_+$. It models the free $\infty$-category generated by the simplicial set $\cC_+$.

As such, $\cC_+$ has a set of 1-simplices identified with a copy of $\cC_1$, and a copy of $\cC_0$. The former of course contains the degenerate edges $s_0^{\cC}(\cC_0)$; these are the edges $s_0(\cC_0)$ of Theorem~\ref{thm.localization}.

\subsection{Computing localizations as pushouts}\label{section.pushouts}
Whenever constructing the localization $\cE[W^{-1}]$ with respect to a subsimplicial set $W \subset \cE$, we can compute the localization as the (honest) pushout of the diagram
	\eqn\label{eqn.pushout}
		\xymatrix{
			W \ar[r] \ar[d] & \cE \\
			|W|
		}
	\eqnd
where $|W|$ is a Kan replacement of $W$. Note that an honest pushout computes the homotopy pushout because the Joyal model structure is left proper, and the arrow $W \into \cE$ is a cofibration.

\begin{remark}
In our setting $\cE = \cC_+$ may not be fibrant in the Joyal model structure, nor the pushout $\cP$.
\end{remark}

\subsection{Necklaces to compute mapping spaces}\label{section.necklaces}
Using the notation from Section~1.1.5 of~\cite{htt}, recall the adjunction
	\eqnn
		\xymatrix{
			\ssets			\ar@/^/[r]^{\fC}
			& \ar@/^/[l]^{N} \Cat_{\Delta}.
		}
	\eqnd
The right adjoint is the simplicial nerve functors $N$, and $\fC$ is the left adjoint; it takes any simplicial set to a category enriched in simplicial sets. Moreover, weak equivalences in the Joyal model structure are those simplicial set maps $f: X \to Y$ which become equivalences after applying $\fC$. 

While an explicit description of $\fC$ would be nice, it is in general computed as a colimit of simplicially enriched categories, so it is not so easy to write down. 

However, one can give a name to the mapping spaces of $\fC$ by the work of Dugger-Spivak~\cite{dugger-spivak}, which we now review.

\begin{defn}[Necklaces]
Let $m_0,\ldots,m_a$ be a sequence of integers $\geq 0$. A necklace $T$ is a simplicial set of the form
	\eqnn
		T = \Delta^{m_0} \vee \ldots \vee \Delta^{m_a}
	\eqnd
where the final vertex of $\Delta^{m_i}$ is attached to the initial vertex of $\Delta^{m_{i+1}}$. 
A map of necklaces is a map of simplicial sets $T \to T'$ respecting the initial vertex of $\Delta^{m_0}$ and the final vertex of $\Delta^{m_a}$.
\end{defn}

Now let $X$ be any simplicial set. Fix two vertices $x,y \in X$. Then we define the category
	\eqnn
	(Nec \downarrow X)_{x,y}
	\eqnd
(in the usual sense of Eilenberg and Mac Lane) to be a category where
\begin{itemize}
	\item
		Objects are maps
	\eqnn
		T \to X
	\eqnd
where $T$ is a necklace, the initial vertex of $T$ is sent to $x$, and the terminal vertex of $T$ is sent to $y$. 
	\item
		Morphisms are commutative diagrams 
			\eqnn
				\xymatrix{
					T \ar[rr] \ar[dr] && T' \ar[dl] \\
					& X
				}
			\eqnd
	where $T \to T'$ is a map of necklaces.
\end{itemize}

We let $N( Nec \downarrow X)_{x,y}$ denote the nerve of this category. It is a simplicial set, as usual, whose simplices are commutative diagrams in the shape of a simplex.

\begin{theorem}[\cite{dugger-spivak}, Theorem~5.2]\label{thm.DS}
The simplicial sets
	\eqnn
		\hom_{\fC[X]}(x,y)
		\qquad
		\text{and}
		\qquad
		N( Nec \downarrow X)_{x,y}
	\eqnd
are weakly homotopy equivalent.
\end{theorem}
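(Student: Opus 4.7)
The plan is to follow the strategy of Dugger--Spivak: construct an explicit natural comparison map, show that both sides convert skeletal attachments in $X$ to homotopy pushouts of simplicial sets, and then verify the statement directly when $X$ is a single simplex.

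For the comparison map, the crucial observation is that for any necklace $T = \Delta^{m_0} \vee \ldots \vee \Delta^{m_a}$ with initial vertex $\alpha$ and terminal vertex $\omega$, the simplicial category $\fC[T]$ is the free composition of the $\fC[\Delta^{m_i}]$, so
\[
\hom_{\fC[T]}(\alpha,\omega) \;\cong\; \prod_{i=0}^{a} \hom_{\fC[\Delta^{m_i}]}(0,m_i) \;\cong\; \prod_{i=0}^{a} (\Delta^1)^{m_i - 1}.
\]
This is contractible and has a canonical zero-simplex (take the ``maximal'' vertex in each cube, corresponding to each bead as a subset of itself). Given an object $(T, T\to X)$ of $(Nec \downarrow X)_{x,y}$, pushing this canonical vertex along $\fC[T] \to \fC[X]$ yields a vertex of $\hom_{\fC[X]}(x,y)$; a morphism of necklaces over $X$ produces an edge between two such vertices, and iterating the construction on chains of morphisms assembles into a map of simplicial sets $\Phi_X : N(Nec \downarrow X)_{x,y} \to \hom_{\fC[X]}(x,y)$, natural in $X$.

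Next I would establish that both functors are homotopy-invariant and carry cellular attachments into homotopy pushouts: if $X$ is obtained from $X'$ by attaching a simplex $\Delta^n$ along $\partial \Delta^n$, then both $\hom_{\fC[-]}(x,y)$ and $N(Nec \downarrow -)_{x,y}$ exhibit $X$ as a homotopy pushout of simplicial sets over the corresponding diagram on $X'$, $\partial \Delta^n$, and $\Delta^n$. For the mapping-space side this uses that $\fC$ is a left Quillen functor (so preserves homotopy colimits) plus a direct analysis of what happens to $\hom$-spaces after attaching a cell; on the necklace side one observes that the over-category decomposes according to which cells of $X$ are hit by each bead. A five-lemma-style induction on skeleta then reduces the problem to $X = \Delta^n$ with $x = 0$, $y = n$. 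Here both sides are patently contractible: the object $(\Delta^n, \id)$ is terminal in $(Nec \downarrow \Delta^n)_{0,n}$ (any necklace $T \to \Delta^n$ is itself a necklace map landing in the necklace $\Delta^n$), while $\hom_{\fC[\Delta^n]}(0,n) \cong (\Delta^1)^{n-1}$ is a cube, so $\Phi_{\Delta^n}$ is automatically a weak equivalence.

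The main obstacle is the pushout step: mapping spaces in $\fC[X]$ do not literally commute with colimits in $X$, and one must justify why cell attachments nevertheless yield homotopy pushouts at the level of $\hom$-spaces. Dugger and Spivak handle this through their notion of \emph{flanked} necklaces, which give cofibrant models for mapping spaces whose strict colimits compute the homotopy types correctly. Once that technical input is available, the rest of the argument is the routine cellular induction sketched above.
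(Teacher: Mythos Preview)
The paper does not supply its own proof of this statement: Theorem~\ref{thm.DS} is quoted verbatim as Theorem~5.2 of Dugger--Spivak and used as a black box in the later sections. So there is no ``paper's proof'' to compare against.

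Evaluated on its own terms, your outline is a faithful sketch of the Dugger--Spivak strategy. The description of $\hom_{\fC[T]}(\alpha,\omega)$ for a necklace $T$ is correct, as is the verification of the base case: the object $(\Delta^n,\id)$ is terminal in $(Nec\downarrow\Delta^n)_{0,n}$ because any $T\to\Delta^n$ is already a necklace map into the one-bead necklace $\Delta^n$, and the commuting-triangle condition forces uniqueness. You have also correctly isolated the genuine difficulty, namely that $\hom_{\fC[-]}(x,y)$ does not literally commute with colimits in the $X$-variable, so the ``homotopy pushout on both sides'' step requires real work; Dugger--Spivak's flanked necklaces and their intermediate functor $E^t$ are exactly the machinery that makes this go through. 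One small quibble: the induction is not really ``five-lemma-style'' but rather a cube/gluing argument on homotopy pushout squares.
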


We will use this theorem in computing the mapping spaces of the localization when $\cC = \Delta^k$ in Section~\ref{section.k>0} below.

\subsection{Finality}
Let $Y$ be an $\infty$-category. We say a map of simplicial sets $f: X \to Y$ is {\em final} if and only if, for every object $y \in Y$, the fiber product simplicial set $X \times_Y Y_{y/}$ is weakly contractible. What we call final in this paper is called cofinal in~\cite{htt}. It is a result due to Joyal---also stated in Theorem~4.1.3.1 of~\cite{htt}---that our definition of final is equivalent to other standard definitions (which sometimes refer to our notion as ``cofinal'').

\begin{example}\label{example.final-semi}
The inclusion $N(\Delta_{inj}) \into N(\Delta)$ is final. (See Lemma~6.5.3.7 of~\cite{htt}.) This is is an oft-used fact when computing geometric realizations of simplicial sets---cofinal maps induce equivalences of colimits, and this finality is one way to see that the geometric realization is insensitive to the degenerate simplices. 
\end{example}

In this paper, the main utility of final maps will be the following:

\begin{lemma}[Proposition 4.1.1.3 of~\cite{htt}]\label{lemma.final-equivalence}
A final map $X \to Y$ induces a homotopy equivalence of geometric realizations $|X| \simeq |Y|$.
\end{lemma}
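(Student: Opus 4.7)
The plan is to leverage the characterization of finality in terms of preservation of colimits, rather than argue from the fiber-contractibility definition directly. Specifically, a theorem of Joyal (HTT Proposition 4.1.1.8) asserts that a map $f: X \to Y$ is final if and only if, for every functor $F: Y \to \cE$ into a cocomplete $\infty$-category $\cE$, the natural comparison $\colim_X (F \circ f) \to \colim_Y F$ is an equivalence. I would take this as an input.

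Granted that input, the deduction is short. Apply the colimit-preservation property with $\cE$ the $\infty$-category of spaces and $F$ the constant functor at a point $\ast$. For any simplicial set $Z$, the colimit of this constant point-valued diagram indexed by $Z$ is canonically identified with the homotopy type $|Z|$: indeed, $|{-}|$ is a left adjoint (up to equivalence) to the inclusion of Kan complexes into simplicial sets, sends each $\Delta^n$ to a contractible space, and $Z$ itself is canonically the colimit in simplicial sets of the diagram $\Delta^\bullet$ indexed by its simplex category. Applying this identification to both $X$ and $Y$ yields $|X| \simeq \colim_X \ast \simeq \colim_Y \ast \simeq |Y|$, as desired.

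The main obstacle is not the above bookkeeping but the equivalence between the two characterizations of finality---fiber contractibility versus colimit preservation. Lurie proves this in HTT by analyzing left fibrations over the join $Y \star \Delta^0$ and reducing to the case of terminal diagrams; I would simply cite this result, since rebuilding it from scratch is orthogonal to the applications in this paper. An alternative route, sidestepping the colimit-preservation characterization, would be to factor $f$ through its left-fibration replacement $X \to \widetilde{X} \to Y$, observe that the resulting fibers are equivalent to the weakly contractible spaces $X \times_Y Y_{y/}$, and then invoke the fact that a left fibration with weakly contractible fibers is a trivial Kan fibration up to Joyal equivalence; this gives the required weak homotopy equivalence on geometric realizations.
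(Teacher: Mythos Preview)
Your argument is correct, but note that the paper does not actually prove this lemma: it is stated purely as a citation of Proposition~4.1.1.3 in Lurie's \emph{Higher Topos Theory}, with no accompanying proof. So there is nothing to compare against on the paper's side.

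That said, your sketch is sound and is essentially how one would extract the statement from the surrounding material in HTT. The identification $|Z| \simeq \colim_Z \ast$ in the $\infty$-category of spaces is standard, and feeding the constant point-valued diagram into the colimit-preservation characterization of finality (HTT~4.1.1.8) immediately gives $|X| \simeq |Y|$. Your alternative route via the left-fibration replacement $X \to \widetilde{X} \to Y$ is also a legitimate way to get there, closer in spirit to how Lurie actually proves 4.1.1.3 directly. Either approach is more than what the paper itself supplies.
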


\section{Proof of Theorem~\ref{thm.localization} when $\cC = \Delta^0$}\label{section.k=0}
As mentioned before, to prove Theorem~\ref{thm.localization}, it suffices to prove the claim when $\cC$ is a $k$-simplex. We begin when $k=0$:

\begin{lemma}\label{lemma.0-simplex} 
The localization of $\Delta^+_0$ is equivalent to $|\Delta^+_0|$, and the latter is contractible. Hence Theorem~\ref{thm.localization} is true when $\cC = \Delta^0$.
\end{lemma}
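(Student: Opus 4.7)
The plan is to prove the two assertions of the lemma in turn: contractibility of $|\Delta^0_+|$, and the identification of its localization with $|\Delta^0_+|$. By Equation~\eqref{eqn.C+}, the $N$-simplices of $\Delta^0_+$ are in bijection with surjections $s:[N]\onto[N']$; in particular $\Delta^0_+$ has a unique vertex $v$ and exactly two 1-simplices, namely the non-degenerate edge $e_1$ coming from $\mathrm{id}:[1]\onto[1]$ (this is the unique element of $s_0(\Delta^0_0)$) and the simplicially-degenerate edge $s_0^{\cC_+}(v)$ coming from $[1]\onto[0]$.

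For contractibility of $|\Delta^0_+|$, I would exhibit an extra degeneracy. Define $s_{-1}:(\Delta^0_+)_N\to(\Delta^0_+)_{N+1}$ on the level of surjections by $s\mapsto\tilde s$, where $\tilde s(0)=0$ and $\tilde s(i)=s(i-1)+1$ for $i\geq 1$. A direct computation using the face-map formula of Section~\ref{section.counit} yields the identities $d_0 s_{-1}=\mathrm{id}$, $d_{i+1}s_{-1}=s_{-1}d_i$ for $i\geq 0$, and $s_{j+1}s_{-1}=s_{-1}s_j$ for $j\geq 0$. These are the classical extra-degeneracy axioms, and together with the trivial augmentation $\Delta^0_+\to\Delta^0=*$ they produce a simplicial homotopy from $\mathrm{id}_{\Delta^0_+}$ to the constant map at $v$ (see e.g.\ May, \emph{Simplicial Objects in Algebraic Topology}, \S 9.6); hence $|\Delta^0_+|\simeq *$.

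For the identification of the localization with $|\Delta^0_+|$, the crucial point is that $\Delta^0_+$ has no edges other than $e_1$ and $s_0^{\cC_+}(v)$, and $s_0^{\cC_+}(v)$ is simplicially degenerate, so it is automatically an equivalence in any quasi-category receiving a map from $\Delta^0_+$. Thus a map $\Delta^0_+\to\cD$ to a quasi-category sends $e_1$ to an equivalence if and only if it sends every edge to an equivalence, which (since a quasi-category all of whose edges are equivalences is a Kan complex) holds if and only if the map factors through a Kan complex. The localization at $\{e_1\}$ and the Kan fibrant replacement $|\Delta^0_+|$ therefore satisfy the same universal property, and so are equivalent. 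Concretely, using the pushout description of Section~\ref{section.pushouts} with $W$ the subcomplex generated by $e_1$, one sees that $W_1=(\Delta^0_+)_1$, so at the level of $1$-simplices the pushout $\Delta^0_+\cup_W |W|$ agrees with $|W|$—and after Joyal-fibrant replacement every edge is an equivalence, confirming that the result is a Kan complex.

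The main obstacle is the combinatorial verification of the extra-degeneracy relations in the first step; this requires careful bookkeeping with the epi-mono factorizations described in Section~\ref{section.counit}, but is otherwise routine. Combining the two steps yields the localization of $\Delta^0_+$ is equivalent to $|\Delta^0_+|\simeq*$, which in turn proves the $k=0$ case of Theorem~\ref{thm.localization} since $\Delta^0$ is itself a point.
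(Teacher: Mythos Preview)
Your proof is correct, and the identification of the localization with the Kan completion matches the paper's reasoning. The contractibility argument, however, takes a genuinely different route: the paper computes $H_*(|\Delta^0_+|)$ via the normalized chain complex (one non-degenerate simplex per dimension, with alternating-sum differentials $0,1,0,1,\ldots$), then computes $\pi_1$ by observing that the unique non-degenerate 2-simplex enforces $e\circ e \sim e$ so that $[e]$ is an idempotent and hence trivial in $\pi_1$, and concludes by Whitehead. Your extra-degeneracy argument is cleaner and avoids the $\pi_1$/Whitehead detour; it amounts to recognizing $\Delta^0_+$ as the nerve of the one-object category with a single non-identity idempotent $e$ (Lurie's $\idem$) and prepending $e$ to every string of morphisms---the key identity $d_1 s_{-1} = s_{-1} d_0$ then holds precisely because $f\cdot e = e$ for every $f\in\{\id,e\}$. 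The paper's approach does buy one thing yours leaves implicit: the relation $e\circ e\sim e$ is what forces $e$ to become homotopic to the identity after localization, which is the content of Remark~\ref{remark.idempotent}. Your final ``concretely'' sentence is a bit loose, since $W$ is strictly smaller than $\Delta^0_+$ above dimension~1; but you do not need it, as your universal-property argument just before it already establishes the claim.
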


\begin{proof}[Proof of Lemma~\ref{lemma.0-simplex}.]
By~\eqref{eqn.C+}, $\cC_+$ is a simplicial set with exactly one non-degenerate simplex in each dimension. (It is in fact isomorphic to the simplicial set $\idem$ from~\cite{htt}, Section~4.4.5.) Then $s_0(\cC_0)$ is the single non-degenerate 1-simplex, so the localization $\cC_+[s_0(\cC_0)^{-1}]$ is just the Kan completion $|\cC_+|$ of $\cC_+$. (Equivalently, it is the fibrant replacement of $\cC_+$ in the Quillen model structure for simplicial sets.)

One explicitly computes the homology groups of the localization by using the normalized chain complex for a simplicial set. Since homology is preserved by fibrant replacement, one easily sees that the homology groups $H_k(|\cC_+|)$ vanish for $k > 0$. 

$\pi_1$ is likewise easily computed: If $e$ is the unique non-degenerate edge, the unique non-degenerate 2-simplex realizes the relation $e \circ e \sim e$ in $\cC_+$. Hence the element of $\pi_1$ generated by $e$ is an idempotent---since $\pi_1$ is a group, this means $[e]$ must be equal to the identity. Thus $\pi_1$ is also a trivial group.

By Whitehead's theorem, the localization is thus a contractible Kan complex, hence categorically equivalent to $\Delta^0$.
\end{proof}

\begin{remark}\label{remark.idempotent}
There is another proof using formal properties of adjunctions: Since we know that the localization is the Kan completion of $\Delta^0_+$, it suffices to prove that the counit map $j_! j^* \Delta^0 \to \Delta^0$ is an equivalence in the Quillen model structure (where fibrant objects are Kan complexes). This is equivalent to the unit map $\Delta^0 \to j^*j_!\Delta^0$ being an equivalence by Zorro's Lemma (a.k.a. the triangular identities, a.k.a. the zigzag identities) for the co/units of an adjunction. We are done because the unit map is an equivalence in the Quillen model structure for any simplicial set.

Regardless, the alternative proof we presented illustrates why localizing with respect to $e \in s_0\cC$ sends $e$ to something {\em homotopic to the identity}, rather than to some other invertible map. The relation $e \circ e \sim e$ forces $e$ to be homotopic to the identity by composing with $e^{-1}$ in the localization.
\end{remark}

\section{Proof of Theorem~\ref{thm.localization} when $\cC = \Delta^k$ with $k \geq 1$}\label{section.k>0}
Now we wish to prove Theorem~\ref{thm.localization} when $\cC = \Delta^k$ for $k \geq 1$, so we must consider the localization of $\cC_+$ along the simplicial subset $W = \coprod_{i \in [k]} \Delta^0_+$. Now, the case $k=0$ shows that $|W| \simeq \coprod_{i \in [k]} \Delta^0$. Replacing $|W|$ accordingly, the localization is modeled by the honest pushout
	\eqn\label{eqn.pushout-simplex}
		\xymatrix{
			W \ar[r] \ar[d] & \cC_+ \ar[d] \\
			\coprod_{i \in [k]} \ast \ar[r] & \cP
		}
	\eqnd
as we recalled in Section~\ref{section.pushouts}. Because we wish to prove that $\cP$ is equivalent to $\Delta^k$ as a quasi-category, we now proceed to show that all of its hom spaces are contractible. 

To do this, we utilize Theorem~\ref{thm.DS}. Using the notation from Section~\ref{section.necklaces}, our goal is to consider $X = \cP$ and show that the simplicial sets $N( Nec \downarrow X)_{x,y}$ are contractible for any choice of vertices $x,y$. We define some auxiliary categories.

\begin{defn}
Let $\cN_{x,y} \subset (Nec\downarrow X)_{x,y}$ denote the full subcategory of those $T \to X$ such that for each $i$, the composition $\Delta^{m_i} \subset T \to X$ is a non-degenerate simplex in $X$.

Let $\cF_{x,y} \subset \cN_{x,y}$ denote the full subcategory of those $T \to X$ where $T \cong \Delta^m$, and where $T$ hits every vertex between $x$ and $y$, inclusive.
\end{defn}

\begin{lemma}\label{lemma.cofinal}
The inclusion of nerves
	\eqnn
		N(\cN_{x,y}) \to N(Nec \downarrow X)_{x,y}
	\eqnd
is a final map. When $X = \cP$ as in \eqref{eqn.pushout-simplex}, the inclusion of nerves
	\eqnn
		N(\cF)_{x,y} \to N(\cN_{x,y}) 
	\eqnd
is also final.
\end{lemma}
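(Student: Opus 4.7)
By the definition of finality recalled in Section~\ref{section.counit}, I need to show that for each object $(T \to X)$ of the target category, the fiber product of the source with the under-category $(Nec \downarrow X)_{x,y}^{(T \to X)/}$ is weakly contractible. For the fully faithful inclusions of ordinary categories under consideration, this fiber product is the nerve of the comma category whose objects are pairs $(T', T \to T')$ with $T'$ lying in the source subcategory. My plan in both parts is to exhibit an initial object in this comma category, which forces its nerve to be contractible.

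\textbf{Part 1.} For $T = \Delta^{m_0} \vee \cdots \vee \Delta^{m_a} \to X$, apply the Eilenberg--Zilber lemma to each bead $\Delta^{m_i} \to X$ to obtain its unique factorization $\Delta^{m_i} \twoheadrightarrow \Delta^{n_i} \to X$ through a non-degenerate simplex. Since a surjective poset map $[m_i] \twoheadrightarrow [n_i]$ automatically preserves $\min$ and $\max$, the surjections assemble into a well-defined map of necklaces $T \twoheadrightarrow T_{\mathrm{EZ}} := \Delta^{n_0} \vee \cdots \vee \Delta^{n_a}$ over $X$ with $T_{\mathrm{EZ}} \in \cN_{x,y}$. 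I would then verify that $(T_{\mathrm{EZ}}, T \twoheadrightarrow T_{\mathrm{EZ}})$ is initial in the comma category: given $(T'', T \to T'' \to X)$ with $T'' \in \cN_{x,y}$, the restriction of $T \to T''$ to each bead $\Delta^{m_i}$ is a simplex of $T''$ whose composition with $T'' \to X$ recovers $\Delta^{m_i} \to X$; since every bead of $T''$ is non-degenerate, the universal property of the Eilenberg--Zilber decomposition forces this simplex to factor uniquely through $\Delta^{n_i}$, and these bead-wise factorizations glue to the unique necklace map $T_{\mathrm{EZ}} \to T''$ over $X$.

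\textbf{Part 2.} Here the special structure of $X = \cP$ enters. I would first analyze the non-degenerate simplices of $\cP$: in the pushout $\cP = \Delta^k_+ \sqcup_W \coprod_i \ast$, the subcomplex $W$ consists precisely of the ``constant-at-$i$'' simplices $(s: [N] \twoheadrightarrow [N'], f: [N'] \to [k])$ with $f$ the constant map to some vertex $i$, and these are exactly the pairs $(s, f)$ for which $f$ fails to be injective on a non-empty image. It follows that the non-degenerate simplices of $\cP$ are in natural bijection with the non-degenerate simplices of $\Delta^k$, i.e., the injections $[N] \hookrightarrow [k]$. Consequently each bead of a necklace $T \to \cP$ in $\cN_{x,y}$ corresponds to an injective subset $S_i \subset \{x, x+1, \ldots, y\}$, and the only object of $\cF_{x,y}$ is the full simplex $\Delta^{y-x} \to \cP$ spanning all of $\{x, \ldots, y\}$. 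The inclusions $S_i \hookrightarrow \{x, \ldots, y\}$ define a canonical necklace map $T \to \Delta^{y-x}$ over $\cP$; any other such map is forced to agree with this one, since the composition with $\Delta^{y-x} \to \cP$ is already determined on vertices and the vertex map $\Delta^{y-x} \to \cP$ is injective. Thus the comma category is a one-object category with only the identity morphism, and its nerve is a point.

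\textbf{Expected obstacle.} The main technical nuisance will be in Part 1, where one must carefully verify that a necklace map $T \to T''$ really does factor bead-wise through the Eilenberg--Zilber surjections, even when $T''$ carries internal identifications not visible in $T$---the issue is that a single bead of $T$ can map to a span of several beads of $T''$, so the E-Z universal property has to be invoked inside the pushout defining $T''$. The subsidiary task in Part 2 is the careful pushout computation for $\cP$: one must confirm both that distinct non-degenerate simplices of $\Delta^k_+$ remain distinct after collapsing $W$, and that the pushout does not create new non-degenerate simplices beyond the obvious ones.
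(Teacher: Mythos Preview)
Your Part~1 is correct and is essentially the paper's argument: both produce an initial object in the under-category by taking the beadwise Eilenberg--Zilber/image factorization of $T \to X$ (the paper phrases this via Dugger--Spivak, Proposition~4.7).

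Your Part~2, however, rests on a mis-computation of the non-degenerate simplices of $\cP$. The subcomplex $W = \coprod_i \Delta^0_+ \subset \Delta^k_+$ consists of the pairs $(s,f)$ with $f$ \emph{constant}; it does \emph{not} consist of all pairs with $f$ non-injective, and your sentence ``these are exactly the pairs $(s,f)$ for which $f$ fails to be injective'' is simply false. After collapsing $W$, the non-degenerate $m$-simplices of $\cP$ for $m>0$ are in bijection with \emph{all} non-constant poset maps $[m] \to [k]$, not just the injections. (For a concrete check with $k=1$: the maps $001$ and $011$ give two non-degenerate $2$-simplices in $\cP$, whereas $\Delta^1$ has none.) Consequently $\cF_{x,y}$ is not a one-object category: its objects are the surjections $[m] \twoheadrightarrow \{x,\ldots,y\}$ for all $m \geq y-x$, and indeed the paper later identifies $\cF \cong (\Delta_{\inj})^{\,y-x+1}$.

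This breaks your proposed initial object. If $T \in \cN$ has a bead $\Delta^{m_i} \to \cP$ corresponding to a non-injective, non-constant $g_i : [m_i] \to [k]$, then there is no simplicial map $\Delta^{m_i} \to \Delta^{y-x}$ over $\cP$ at all: pulling the non-degenerate simplex $(\id,\,\{x,\ldots,y\}\hookrightarrow[k])$ back along any $h:[m_i]\to[y-x]$ yields a pair whose second component is injective, so it can never equal $(\id,g_i)$. Hence $\Delta^{y-x}$ need not even lie in the comma category $\cF \times_{\cN} \cN_{T/}$, let alone be initial there. The paper's fix goes in the opposite direction: it exhibits a \emph{terminal} object by first filling the necklace $T$ to a single simplex $\Delta^M$ (using that $\Delta^k$ is a nerve), and then enlarging to $\Delta^{M'}$ by inserting the vertices of $\{x,\ldots,y\}$ missed by the resulting map $[M]\to[k]$, so that $[M']\to[k]$ becomes a surjection and hence an object of $\cF$.
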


\begin{proof}[Proof of Lemma~\ref{lemma.cofinal}.]
Without loss of generality, we may assume $x=0$ and $y=k$. We will drop the subscripts $x,y$ from the notation out of sloth.

We first prove the first arrow is final. We do this by showing that the fiber product category
	\eqnn
		\cN \times_{(Nec \downarrow X)} (Nec \downarrow X)_{T/}
	\eqnd
has an initial object for each object $T \in \cN$. Since the nerve of any category with an initial object is contractible, this proves finality. But the existence of an initial object is essentially a consequence of Proposition~4.7 of~\cite{dugger-spivak}. There it is proved that for any $T$, there is an initial $T'$ such that the arrow $T \to T'$ is a surjection of necklaces. Then any other object $(g: T \to T'')$ in the fiber product category receives a unique map from $T'$---the map picks out the image of $g$.

To prove the second assertion, we prove that the fiber product category 
	\eqnn
		\cF \times_{\cN} \cN_{T/}
	\eqnd
has a terminal object for any $T$. So let $T \to \cP$ be a necklace each of whose simplices is non-degenerate. We ignore its 0-simplices without loss of generality, so that $T \cong \Delta^{m_0} \vee \ldots \vee \Delta^{m_a}$ with each $m_i > 0$. By definition of $\cP$, each non-degenerate $m$-simplex with $m>0$ must hit more than one vertex of $\Delta^k$---such non-degenerate $m$-simplices of $\cP$ are in bijection with such non-degenerate simplices in $\Delta^k_+$. Further, non-degenerate simplices in $\Delta^k_+$ are in bijection with (arbitrary) simplices in $\Delta^k$ by \eqref{eqn.C+}. Since $\Delta^k$ is the nerve of a category, inner horns have unique fillers, and hence necklaces uniquely fill to a simplex of dimension $M = m_0 + \ldots + m_a$. In other words, we have found a unique filler
	\eqnn
		\xymatrix{
			\Delta^{m_0} \vee \ldots \vee \Delta^{m_a} \ar[d] \ar[r] & \Delta^k \\
			\Delta^M \ar@{-->}[ur] & .
		}
	\eqnd
In $\Delta^k$, such a simplex corresponds to a poset map $j: [M] \to [k]$, though not necessarily surjective (and hence not an object in $\cF$). However, there is a natural completion of $M$ to a surjection---endow the set
	\eqnn
	[M'] \cong [M] \bigcup \left( [k]\setminus j([M]) \right)
	\eqnd
with the linear ordering so that $[M'] \to [k]$ is a surjective poset map and so that the composite $[M] \to [M'] \to [k]$ is equal to $j$. It is easy to check that this $[M']$ is a terminal object.
\end{proof}

\begin{lemma}\label{lemma.NcF}
$|N(\cF)_{x,y}|$ is contractible.
\end{lemma}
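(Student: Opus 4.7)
The plan is to exhibit $\cF_{x,y}$ as a product of categories with contractible classifying spaces. Assume $x \leq y$: the case $x = y$ gives a one-object category (higher-dimensional constant simplices live in $W$ and are collapsed in $\cP$), and when $x > y$ the category is empty; after relabeling I reduce to $x = 0, y = k$ with $k \geq 1$. An object of $\cF_{0,k}$ is a non-degenerate $m$-simplex $\Delta^m \to \cP$ starting at $0$, ending at $k$, and hitting every vertex, which (as in the terminal-object argument for $\cF$ in the proof of Lemma~\ref{lemma.cofinal}) corresponds to a surjective poset map $j : [m] \onto [k]$. A morphism $j \to j'$ is an endpoint-preserving poset map $\phi : [m] \to [m']$ whose composite with $j'$ agrees with $j$ as an $m$-simplex of $\cP$. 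Using the explicit action formula on $(\Delta^k_+)_m$ recalled in Section~\ref{section.counit} (factor $\phi = f' \circ \sigma$ as injection-after-surjection), the composite represents the class $(\sigma, j' \circ f') \in (\Delta^k_+)_m$; because $j$ is non-constant, so $(\id, j)$ lies outside the collapsed subset $W$, the only way for these classes to agree in $\cP$ is $\sigma = \id$, i.e., $\phi$ is injective and $j' \circ \phi = j$ literally.

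Next, the equation $j'\phi = j$ decomposes $\phi$ into injective order-preserving maps $j^{-1}(i) \hookrightarrow (j')^{-1}(i)$ on each preimage $i \in [k]$, and the endpoint conditions $\phi(0) = 0$, $\phi(m) = m'$ only constrain the restrictions at $i = 0$ and $i = k$. This gives a product decomposition
\[
    \cF_{0,k} \cong \cD_L \times \cD_M^{k-1} \times \cD_R,
\]
where $\cD_L$ (resp.\ $\cD_R$) has objects $p \geq 1$ and morphisms the injective order-preserving maps $[p-1] \to [p'-1]$ preserving the initial (resp.\ terminal) vertex, while $\cD_M \cong \Delta_{\inj}$ has no endpoint constraints. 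Now $\cD_L$ and $\cD_R$ each have an initial object $p = 1$, hence contractible nerves; for $\cD_M \cong \Delta_{\inj}$, Example~\ref{example.final-semi} with Lemma~\ref{lemma.final-equivalence} gives $|N(\Delta_{\inj})| \simeq |N(\Delta)|$, and the latter is contractible via the natural transformation $\id_\Delta \to \mathrm{const}_{[0]}$ whose components are the unique maps $[n] \to [0]$. Since geometric realization commutes with finite products, $|N(\cF_{0,k})|$ is a product of contractibles and hence contractible.

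The main technical hurdle is the morphism analysis in the first paragraph: one must carefully track which classes in $(\Delta^k_+)_m$ become identified in the pushout $\cP = \Delta^k_+ \cup_W \coprod_{i} \ast$, and show that non-injective necklace maps $\phi$ fail to induce morphisms in $\cF$. This crucially uses that $j$ is non-constant so that $(\id, j) \notin W$, a hypothesis that holds as soon as $k \geq 1$.
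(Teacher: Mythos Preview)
Your argument is correct and follows essentially the same strategy as the paper: identify $\cF_{0,k}$ with a product of categories indexed by the fibers $j^{-1}(i)$, then show each factor has contractible nerve (the middle factors via $|N(\Delta_{\inj})| \simeq |N(\Delta)| \simeq \ast$). The one substantive difference is that you keep track of the endpoint-preservation condition built into the definition of a necklace map, which forces the outer factors to be the under/over categories $\cD_L$ and $\cD_R$ rather than full copies of $\Delta_{\inj}$; the paper writes the product as $(\Delta_{\inj})^{k+1}$, tacitly dropping this constraint. Your bookkeeping is the more careful one, and since $\cD_L$ and $\cD_R$ have initial objects the conclusion is unaffected either way.
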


\begin{proof}[Proof of Lemma~\ref{lemma.NcF}.]
Without loss of generality, we may assume that $x=0$ and $y=k$. This means $\cF$ can be described as follows: 
An object is a choice of non-degenerate $m$-simplex in $\Delta^k_+$
		\eqnn
			\Delta^m \to \Delta^k_+
		\eqnd
hitting every vertex from 0 to $k$. In terms of \eqref{eqn.C+}, this is exactly the data of a pair 	
		\eqnn
			(f,s=\id)
		\eqnd
where $f: [m] \to [k]$ is a surjection, and $s: [m] \cong [m]$ is the identity map. Now, since $\cF$ is the full subcategory where $(f,s) = (f,\id)$, a morphism is given by any $g$ such that the image map $[\un{m}] \to \im(g)$ is an isomorphism---i.e., $g$ must be an injection. 

To summarize, $\cF$ is equivalent to the category whose objects are surjective poset maps $f: [m] \to [k]$, and whose morphisms are injective poset maps $g: [\un{m}] \to [m]$ satisfying $f \circ g = \un{f}$. By identifying
	\eqnn
		(f: [m] \to [k])
		\mapsto
		([m_0],\ldots,[m_k]),
		\qquad
		[m_i] \cong f^{-1}(i),
	\eqnd
one obtains an isomorphism of categories $\cF \cong (\Delta_{inj})^{1+k} = \Delta_{\inj} \times \ldots \times \Delta_{\inj}$. 

So we are finished if we can prove that the nerve $N(\Delta_{inj})$ has contractible realization. This can be done in any number of ways: For instance, as mentioned in Example~\ref{example.final-semi}, the inclusion $N(\Delta_{inj}) \into N(\Delta)$ is final. On the other hand, $|N(\Delta)|$ is contractible because $\Delta$ has a terminal object called $[0]$. Since final maps induce homotopy equivalences of realizations, $|N(\Delta_{inj})|$ is contractible.

This completes the proof.
\end{proof}

\begin{lemma}\label{lemma.contractible}
Let $\cP$ be the simplicial set from \eqref{eqn.pushout-simplex}. Then all the mapping simplicial sets of $\fC[\cP]$ are weakly contractible.
\end{lemma}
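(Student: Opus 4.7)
The plan is to assemble the three preceding lemmas, together with the Dugger--Spivak theorem, into a single chain of weak equivalences. Fix vertices $x, y$ of $\cP$; the goal is to show the mapping simplicial set $\hom_{\fC[\cP]}(x, y)$ is weakly contractible. First, I would invoke Theorem~\ref{thm.DS} to replace the (a priori mysterious) mapping space with the concrete nerve $N(Nec \downarrow \cP)_{x,y}$ up to weak homotopy equivalence. Because $\cP$ need not be fibrant in the Joyal model structure, routing through $\fC$ and Dugger--Spivak is essentially forced on us: this is the only clean way to access mapping spaces of the underlying $\infty$-category.

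Next, I would read off from Lemma~\ref{lemma.cofinal} the two successive final maps
\[
N(\cF)_{x,y} \longrightarrow N(\cN_{x,y}) \longrightarrow N(Nec \downarrow \cP)_{x,y},
\]
and apply Lemma~\ref{lemma.final-equivalence} to convert each into a weak homotopy equivalence on geometric realizations. Composing yields $|N(Nec \downarrow \cP)_{x,y}| \simeq |N(\cF)_{x,y}|$, and Lemma~\ref{lemma.NcF} supplies the contractibility of the right-hand side. Combining with the first step closes the loop: $\hom_{\fC[\cP]}(x,y)$ is weakly contractible.

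One caveat to flag in the write-up is that the above argument applies to pairs $x, y$ for which the relevant necklace categories are nonempty; concretely, pairs with $x \leq y$ in the poset of vertices of $\Delta^k$. For $x > y$ all the categories in the chain are empty, as no necklace in $\cP$ can travel from $x$ to $y$, and correspondingly no morphisms in $\fC[\cP]$ connect these vertices. The lemma is then to be read in the natural sense: every nonempty mapping space is weakly contractible.

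I expect no real obstacle in this step. The difficulty was already discharged in the preceding lemmas, where the combinatorial reduction from arbitrary necklaces in $\cP$ down to a product $(\Delta_{\inj})^{1+k}$ was carried out, and where the contractibility of $|N(\Delta_{\inj})|$ was deduced via the finality of $N(\Delta_{\inj}) \to N(\Delta)$. The present lemma is the clean payoff that stitches Theorem~\ref{thm.DS}, Lemma~\ref{lemma.cofinal}, Lemma~\ref{lemma.final-equivalence}, and Lemma~\ref{lemma.NcF} into a single conclusion, setting up the final identification of $\cP$ with the quasi-category $\Delta^k$ in the remainder of the section.
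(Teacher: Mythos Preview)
Your proposal is correct and follows essentially the same approach as the paper: chain Theorem~\ref{thm.DS}, the two final maps of Lemma~\ref{lemma.cofinal} (via Lemma~\ref{lemma.final-equivalence}), and Lemma~\ref{lemma.NcF} to conclude contractibility of the mapping spaces. Your added caveat about the $x > y$ case is a reasonable clarification that the paper leaves implicit.
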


\begin{proof}[Proof of Lemma~\ref{lemma.contractible}.]
We drop the subscripts $x,y$ out of sloth.
Passing to Kan completions/geometric realization of simplicial sets, we have a sequence of maps
	\eqnn
		\xymatrix{
		\ast \ar[rr]_-{\sim}^-{\text{Lem~\ref{lemma.NcF}}}
			&& |N(\cF)| \to 
			|N(\cN) | \to 
			|N( Nec \downarrow X) |\ar[rr]^-{\text{Thm~\ref{thm.DS}}}_-{\sim}
			&&|\hom_{\fC[X]}|.
		}
	\eqnd 
The unlabeled arrows are homotopy equivalences because they come from final maps by Lemma~\ref{lemma.cofinal}, and final maps induce homotopy equivalences of realizations. This completes the proof.
\end{proof}

\begin{remark}
We can give some intuition as to why  Lemma~\ref{lemma.contractible} should be true: $\cP$ has an underlying $\infty$-groupoid equal to $\coprod \Delta^0$. We note (1) the localization does not invert any morphisms which are not endomorphisms, and (2) $\cC_+$ is the free $\infty$-category generated from $\coprod \idem$ (see the proof of Lemma~\ref{lemma.0-simplex}) by adjoining a unique morphism from the $i$th object to the $j$th object for $i<j$. This means that $\hom_\cP(i,j)$ is homotopy equivalent to the free bimodule for $\End_\cP(i) \times \End_\cP(j) \cong \Delta^0 \times \Delta^0$---i.e., a contractible space.
\end{remark}

\begin{corollary}
Theorem~\ref{thm.localization} is true when $\cC = \Delta^k$ for $k \geq 1$.
\end{corollary}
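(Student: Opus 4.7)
The plan is to assemble the lemmas of this section into a proof that the canonical comparison map from the pushout $\cP$ of~\eqref{eqn.pushout-simplex} to $\Delta^k$ is a categorical equivalence; this will establish Theorem~\ref{thm.localization} in the case $\cC=\Delta^k$.

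First I would observe that, by Section~\ref{section.pushouts}, the pushout $\cP$ is a model for the localization $\cC_+[s_0(\cC_0)^{-1}]$. Second, the counit $\epsilon:\cC_+\to \Delta^k$ collapses each summand $\Delta^0_+\subset W$ onto the corresponding vertex of $\Delta^k$ (this is part of the content of Lemma~\ref{lemma.0-simplex}). By the universal property of the pushout, $\epsilon$ therefore factors canonically as
$$\cC_+\longrightarrow \cP \xrightarrow{\phi} \Delta^k,$$
and the task reduces to showing that $\phi$ is a categorical equivalence.

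I would check this by passing through the rigidification functor $\fC$ and proving that $\fC[\phi]$ is a Dwyer--Kan equivalence. Both simplicial categories have object set $\{0,1,\dots,k\}$ and $\fC[\phi]$ is the identity on objects, so only the maps on hom-spaces need to be analyzed. For each pair $i\le j$, the target $\hom_{\fC[\Delta^k]}(i,j)$ is contractible (a standard computation, e.g., from~\cite{htt}), and the source $\hom_{\fC[\cP]}(i,j)$ is weakly contractible by Lemma~\ref{lemma.contractible}; hence $\fC[\phi]$ is a weak equivalence on these hom-spaces. For $i>j$, both hom-spaces are empty: on the $\Delta^k$ side this is immediate, and on the $\cP$ side it follows from~\eqref{eqn.C+}, which shows that every non-degenerate simplex of $\cC_+$ comes from an injection into $[k]$, so that no necklace in $\cP$ can run from $i$ to $j$.

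Combining the two cases shows $\fC[\phi]$ is a Dwyer--Kan equivalence, hence $\phi$ is a categorical equivalence, which completes the proof. The serious content was already absorbed in Lemmas~\ref{lemma.cofinal}, \ref{lemma.NcF}, and~\ref{lemma.contractible}; the only mild obstacle is the bookkeeping verification that $\epsilon$ descends through the pushout and that the empty-hom-space case on the $\cP$ side is correctly handled via~\eqref{eqn.C+}.
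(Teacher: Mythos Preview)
Your argument is essentially identical to the paper's: reduce to showing $\fC[\cP]\to\fC[\Delta^k]$ is a Dwyer--Kan equivalence, note the bijection on objects, and invoke Lemma~\ref{lemma.contractible} together with the contractibility of the hom-spaces of $\fC[\Delta^k]$. Your added bookkeeping (factoring $\epsilon$ through the pushout, and treating the $i>j$ case) is fine, though one phrasing is slightly off: non-degenerate simplices of $\cC_+$ correspond via~\eqref{eqn.C+} to arbitrary poset maps $[N]\to[k]$, not to injections---the point you need is only that such maps are order-preserving, which already forbids necklaces running from $i$ to $j$ when $i>j$.
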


\begin{proof}
It suffices to show that the natural map $\fC[\cP] \to \fC[\Delta^k]$ is a weak equivalence of categories enriched in simplicial sets.  Note that the map obviously induces a bijection on objects. Also obvious is that the hom simplicial sets of the codomain are all contractible. Finally, Lemma~\ref{lemma.contractible} says that all the hom simplicial sets of the domain are contractible, so the natural map is an equivalence.
\end{proof}

\bibliographystyle{amsalpha}
\bibliography{biblio}

%\layout

%How To Compile
%
%The document is test.tex and I compiled it as follows.
%
%xelatex test
%makeindex test
%makeindex test.nlo -s nomencl.ist -o test.nls
%xelatex test

%	How to make new notation index
%   \newnotation{NOTATION}{EXPLANATION}
%	\index{Name}

\end{document}